\newtheorem{proposition}{Proposition}
\newtheorem{algorithm}{Algorithm}
\title{
\Large{Multiscale numerical methods for passive advection-diffusion in incompressible turbulent flow fields}}
\author[1]{Yoonsang Lee\thanks{ylee@cims.nyu.edu}}
\author[2]{Bjorn Engquist\thanks{engquist@math.utexas.edu}}
\affil[1]{Courant Institute of Mathematical Sciences, New York University}
\affil[2]{Department of Mathematics and ICES, University of Texas at Austin}
\date{}
\begin{document}
\maketitle

\begin{abstract}
We propose a seamless multiscale method which approximates the macroscopic behavior of the passive advection-diffusion equations with steady incompressible velocity fields with multi-spatial scales. The method uses decompositions of the velocity fields in the Fourier space, which are similar to the decomposition in large eddy simulations. It also uses a hierarchy of local domains with different resolutions as in multigrid methods. The effective diffusivity from finer scale is used for the next coarser level computation and this process is repeated up to the coarsest scale of interest.
The grids are only in local domains whose sizes decrease depending on the resolution level so that the overall computational complexity increases linearly as the number of different resolution grids increases. The method captures interactions between finer and coarser scales but has to sacrifice some of the interaction between the fine scales. The proposed method is numerically tested with 2D examples including a successful approximation to a continuous spectrum flow.
\end{abstract}

\textit{Keywords:} multiscale methods, seamless, advection enhanced diffusion

\section{Introduction}\label{sec:INTRO}
Multiscale problems in science and engineering pose severe challenges for numerical simulations. In a direct simulation the finest scales of interest must be well resolved over domains defined by the coarsest scale. For example, in the description of fluid flows, if the velocity field has high wavenumber component of order $\frac{1}{\epsilon}$ for $ 0<\epsilon\ll 1$,  a direct numerical simulation requires a mesh size at least of order $\epsilon$ in all dimensions. This makes the direct numerical simulation prohibitive for very small $\epsilon$ values over domains of size $\mathcal{O}(1)$. 

A number of computational techniques for multiscale problems have recently been suggested in which the microscales are only approximated on a set of small domains in order to reduce the computational cost. One common feature of these methods is that microscale simulations are coupled to a macroscale model that is defined for the full computational domain. Superparameterization which was proposed by Grabowski and further developed and analyzed by Majda and others (see \cite{CRCP,SP,SPperspective} and references therein) is an example which uses small local domains in space and time to reduce the computational complexity of the simulation of atmosphere.

The upscaling for reservoir simulation (see \cite{DurUPSCALING,DurBLOCKPERM} for example, and references therein) is another example of multiscale methods, which describe macroscopic behavior using local simulations. Upscaling is a technique for generating coarse scale models of highly heterogeneous subsurface formations using computations over microscale local domains.
In these upscaling techniques, any global fine scale calculations is not required but the local fine scale simulations cover the full domain.

The heterogeneous multiscale method (HMM) \cite{origHMM, generalHMM} is also an example which has similarity with Superparameterization and the upscaling. HMM is a general framework for designing multiscale algorithms, not restricted to a specific problem, and it has been used for various problems and applications (see \cite{generalHMM} and reference therein). In HMM the microscale simulations, which cover only part, are used to supply the missing microscale data to the macroscale system. The microscale computations are independent of each other which can be utilized to expedite the simulation using parallel computations but they are constrained to be consistent with the local macroscale state. 

One difficulty with all these methods is that they typically rely on scale separation in the original problem. The purpose of introducing and extending what we called seamless heterogeneous multiscale method (SHMM) in \cite{parabolicSHMM} is to develop a technique based on the HMM idea but to overcome the problem with scale separation and to allow for simulations when the scales are not known a priori. The method captures the interactions between finer and coarser scales without scale separation but has to sacrifice some of the interactions between fine scales.

More specifically, we consider a seamless method of the following passive advection-diffusion equation of a scalar field $u(t,\mathbf{x}):[0,\infty)\times \mathbb{R}^d\to\mathbb{R}$ in a given steady incompressible velocity field $\mathbf{v}(\mathbf{x}):\mathbb{R}^d\to\mathbb{R}^{d}$,
\begin{equation}\label{eq:model}
	\begin{split}
  \frac{\partial u(t,\mathbf{x})}{\partial t}&=\mathbf{v}(\mathbf{x})\cdot\nabla u+\kappa\Delta u, \qquad (t,\mathbf{x})\in (0,\infty)\times\mathbb{R}^d\\
	\nabla\cdot \mathbf{v}&=0
  	\end{split}
\end{equation}
where $\kappa>0$ is the molecular diffusivity. This equation often describes the flow of physical quantities, such as particles or heat, of a wide range of excited spatial and temporal scales. It has many important applications such as temperature prediction in fluid flows, and the transportation through heterogeneous porous media. In this study we include a steady case when $\mathbf{v}(\mathbf{x})$ is of turbulent nature.
Turbulent flows have a wide range of spatial scales and this multiscale property makes the model problem very complicated to analyze. Even the simulation of this linear model problem is challenging because of the wide range of active spatial and temporal scales of the velocity field. The small scale components of the velocity field usually shows dissipative behavior and thus the model is often model by an enhanced diffusivity (also called 'Eddy diffusivity' in turbulence literature \cite{TUB}). 

Our seamless method approximates the enhanced diffusion by accurately approximating the effective diffusivity through a hierarchy of local microscale simulations involving the different scale components of the velocity fields rather than by estimating it in an ad-hoc way. The seamless strategy is based on the HMM framework of focusing on the macroscale and have local microscale problems supplying approximations to the effective or homogenized diffusivity \cite{generalHMM}. It also contains components of multigrid (MG) and of large eddy simulation (LES). The SHMM technique will have a hierarchy of grids without scale separation as in MG. In contrast to MG, SHMM does not approximate the solution on the fine grids over the global domain. Instead, it has local domains whose size decreases depending on the scale level of the velocity component to have the same computational complexity at each level. Another key features of SHMM is that it requires filtering or projections of functions similar to that in LES in order to decompose the velocity field into different scale parts which can be well represented on appropriate grids. Once it has the decomposition of the velocity field, iterated technique is used to capture the block effective diffusivity using a combination of local simulations and a closed form of the effective diffusivity of the shear flows \cite{advdifreview}. 

The terminology 'seamless' for HMM has been already used in \cite{seamlessHMM}. The key goal of that method is to overcome the coupling between the macro and micro states of the system, which can be a difficult problem in actual implementations, particularly when the reconstruction of microscale states consistent with the macroscale state is not obvious. Essentially, this method still requires scale separation of the problem and we use the acronym SHMM here for the heterogeneous multiscale methods for problems with a wide range of scales without any scale gap. There are also other methods to calculate the large scale behavior of the passive advection-diffusion equation in turbulent velocity fields. One good example uses Monte-Carlo simulations and is further extended by hierarchical ways (\cite{waveMC,FourierMC} and references therein).

This paper has the following structure. In Section \ref{sec:REVIEW}, we review some results of homogenization theory of the model problem for periodic flows in addition to the closed form of the effective diffusivity of shear flows. The Heterogeneous Multiscale Method is also reviewed since it is used as the core component of the new method. SHMM is described in Section \ref{sec:SHMM} and the related decomposition of the velocity field and iteration process for effective diffusivity is explained in detail. The method is numerically tested for some numerical examples including shear flow, combinations of cellular flows, and continuous spectrum velocity field problem in Section \ref{sec:NUMERICAL}.

\section{Homogenization and Heterogeneous Multiscale Methods}\label{sec:REVIEW}
In this section, we recall some classical results of Homogenization theory \cite{BLP, RusHomo} for periodic cellular velocity fields (see \cite{PAVA,FanPa3,advdifreview} for random flows). The homogenization results show existence of macroscopic models of the advection-diffusion equation under periodic incompressible flows. This provides a model for determining the macroscopic solver in SHMM. We also review the closed formula for the effective diffusivity of the shear flow which is explicitly used in SHMM without numerical simulations to reduce the computational complexity. 
After homogenization, we review Heterogeneous Multisclae Mehtod (HMM) which is a building block of SHMM. 

\subsection{Homogenization of periodic flows}\label{homogenization}
Homogenization of periodic structures is a well studied field \cite{BLP, RusHomo} and we briefly review the homogenization theory of periodic mean-zero flows which provides a mascroscopic model of the passive scalar with an enhanced diffusivity at large scales and long times due to the combined effects of molecular diffusion and advection. 
In many applications, fluid flows are driven by a large-scale pressure gradient, and the resulting flow consists of some constant mean motions and fluctuations \cite{constmeanflow,meanflow}. We restrict our interest to zero-mean flows to discuss the diffusive long-time and large-scale behaviors. Homogenization theory of non-zero mean flows can be found in \cite{quasiperiodichomo,pavliotisthesis}.

\subsubsection{Homogenization with zero-mean periodic velocity field}
Let $\mathbf{v}(\mathbf{y})$ be an incompressible velocity field which is 1-periodic in $\mathbf{y}$ and has zero-mean velocity. 
We further assume that the initial value of $u$ are slowly varying, so that for a small positive parameter $\epsilon\ll1$, it can be modeled as
$$u(0,\mathbf{x})=g(\epsilon\mathbf{x}).$$
We then rescale the time and space according to diffusive scaling
$$\mathbf{x}\to\epsilon\mathbf{x},$$
$$t\to\epsilon^2 t.$$
Under this rescaling, the new passive scalar field $u^{\epsilon}(t,\mathbf{x})$ of \eqref{eq:model} satisfies the equation
\begin{equation}
	\begin{split}
	\frac{\partial u^{\epsilon}(t,\mathbf{x})}{\partial t}=&\frac{1}{\epsilon}\mathbf{v}(\frac{\mathbf{x}}{\epsilon})\cdot\nabla u+\kappa\nabla u,\\
	u^{\epsilon}(0,\mathbf{x})=&g(\mathbf{x}).
	\end{split}
\end{equation}
Using a stream function $\psi(\mathbf{x/\epsilon})$ of the velocity field
$$\mathbf{v}=(-\partial_{x_2}\psi,\partial_{x_1}\psi),$$
the rescaled equation is given by
\begin{equation}
	\begin{split}
	\frac{\partial u^{\epsilon}(t,\mathbf{x})}{\partial t}=&\nabla\cdot\left(\begin{pmatrix}\kappa&-\psi(\mathbf{x/\epsilon})\\\psi(\mathbf{x/\epsilon})&\kappa\end{pmatrix}\nabla u^{\epsilon}\right)\\
	u^{\epsilon}(0,\mathbf{x})=&g(\mathbf{x}).
	\end{split}
\end{equation}

For a long-time and large space-scale (which is implied by $0<\epsilon<\ll 1$), the rescaled scalar field is approximated by $U(t,\mathbf{x})$, which is the solution of the homogenized equation
\begin{equation}\label{eq:homo}
	\begin{split}
	\frac{\partial U(t,\mathbf{x})}{\partial t}=&\nabla \left(\mathcal{K}\nabla U\right)\\
	U(0,\mathbf{x})=&g(\mathbf{x})
	\end{split}
\end{equation}
with constant, symmetric positive definite diffusivity $\mathcal{K}$ defined by
$$\mathcal{K}=\kappa I+\frac{1}{2}\int_{[0,1]^2}\left(\mathbf{v}(\mathbf{y})\otimes\chi(\mathbf{y})+\chi(\mathbf{y})\otimes \mathbf{v}(\mathbf{y})\right)d\mathbf{y}$$
where $I$ is the identity matrix and the vector field $\chi(\mathbf{v})$ is the solution to the cell problem
\begin{equation}\label{eq:cell}
-\mathbf{v}(\mathbf{y})\cdot\nabla \chi-\kappa\nabla \chi=\mathbf{v}(\mathbf{y})\end{equation}
with periodic boundary condition and $\int_{T^2}\chi(\mathbf{y})d\mathbf{y}=0$.

The convergence of $u^{\epsilon}$ to $U$ is made precise in the following sense \cite{BLP, RusHomo,STUART}
$$\lim_{\epsilon\to 0}\sup_{0\leq t\leq t_0}\sup_{\mathbf{x}\in\mathbb{R}^d}|u^{\epsilon}(t,\mathbf{x})-U(t,\mathbf{x})|=0$$
for every finite time $t_0$ which is independent of $\epsilon$.
The homogenization result says that under periodic flows with mean zero, the long-time and large-scale behavior of the advection-diffusion equation can be effectively described by an enhanced diffusivity, which is not necessarily isotropic. Thus, the main focus of our approach in this study is the estimation of the effective diffusivity from the multiscale velocity field without solving \eqref{eq:cell}.

\subsubsection{Homogenization of Shear Flow}
The seamless method proposed in this paper decomposes the velocity field into different scale parts and some of them are approximately shear flows. We utilize a simple formula for the effective diffusivity of shear flows to accelerate computations. In this section, we review some results on the homogenization of shear flow (see \cite{STUART} for the derivation) with an anisotropic molecular diffusivity $K=\begin{pmatrix}\kappa_1&0\\0&\kappa_2\end{pmatrix}$. 

Let the velocity field $\mathbf{v}(\mathbf{y})$ is given by
$$\mathbf{v}=(0,v_2(y_1))$$
where $v_2(y_1)$ is a smooth, 1-periodic function with mean zero. Then the effective diffusivity $\mathcal{K}$ by the homogenization theory is given by the following $2\times2$ matrix
$$\mathcal{K}=\begin{pmatrix}\kappa&0\\0&\mathcal{K}_{22}\end{pmatrix}$$
where
$$\mathcal{K}_{22}=\kappa_2+\frac{1}{\kappa_1}\int_0^1|\frac{d\phi}{dy_1}|^2dy_1$$
and $\phi$ is a periodic solution to
$$-\frac{d^2\phi(y_1)}{dy_1^2}=v_2(y_1).$$
If $\psi$ is the stream function of $\mathbf{v}$ with mean zero, we have
\begin{equation}\label{eq:shearhomoX}\mathcal{K}_{22}=\kappa_2+\frac{1}{\kappa_1}\int_0^1|\psi|^2dy_1.\end{equation}

Similarly, in the case of $\mathbf{v}(\mathbf{y})=(v_1(y_2),0)$, we have
$$\mathcal{K}=\begin{pmatrix}\mathcal{K}_{11}&0\\0&\kappa_2\end{pmatrix}$$
where
\begin{equation}\label{eq:shearhomoY}\mathcal{K}_{11}=\kappa_1+\frac{1}{\kappa_2}\int_0^1|\psi|^2dy_2\end{equation}
for the zero-mean stream function $\psi$ of $\mathbf{v}(\mathbf{y})$.

\subsection{Heterogeneous Multiscale Method for Homogenization Problems}\label{HMM}
The Heterogeneous Multiscale Method is a general framework for multiscale problems with focus on the macroscopic behavior of the solution rather than describing all microscopic details and with computational complexity independent of the finest scale \cite{FDHMM,generalHMM}. By assuming existence of a macroscopic model of the problem
\begin{equation}U_t=F(U)\label{HMMmodel}\end{equation}
with unknown macroscopic force $F(\cdot)$, the main process of HMM is to approximate $F(U)$ by solving local microscale problems on the fly.

The basic components of HMM are as follows:
\begin{enumerate}
\item \textit{A macroscopic solver}. Based on knowledge of the macroscale behavior of the problem, we make an assumption about the structure of the macroscopic model, for which we select a suitable macroscale solver. 
\item \textit{Estimation of the missing macrscale force $F(U)$} 
	\begin{enumerate}
		\item \textit{Constrained microscale simulation}. At each point where macroscale data are required, perform a series of microscopic simulations on local grids which are constrained so that they are consistent with the local macroscale data.
		\item \textit{Data processing}. Use the results from the microscopic simulations to
extract the macroscale data required in the macroscale solver.
	\end{enumerate}	
\end{enumerate}

We now state the HMM procedure for our model problem \eqref{eq:model}. For our model problem, the effective force $F(U)$ is given by 
\begin{equation}
F(U)=\nabla\cdot P(U)
\end{equation}
where $P(U)$ is the effective flux. Thus the main focus is to approximate the effective flux $P(U)$ from local microscale solutions.

\medskip
\textbf{HMM algorithm for \eqref{eq:model}}

At the $n$-th macro time step with a macro solution $U^n$ defined on macro grid points $\{X_{ij},i,j=1,2,...,N\}$:
\begin{enumerate}
	\item Given the current state of the macro variables $U^n$, re-initialize the micro variable for each local domain $\Omega_{ij}$:
	$$u^{n,0}_{ij}=R_{ij}U^n$$
	where $R_{ij}$ is a reconstruction operator which plays the same role as the interpolation or prolongation operators in the multigrid method. In general, the reconstruction operators are independent of the macro grid points, that is, $R_{ij}=R$ for all $i$ and $j$.
	\item  Evolve microscale problems to reach a quasi-stationary state (that is, $\partial_t u=\delta\ll 1$) with a microscopic time step $\delta t$ and a fine grid:
		$$u^{n,m+1}_{ij}=\mathcal{S}^{\delta t}_{ij}(u^{n,m};U^n),\quad m=0,...,M-1,\mbox{ in }\Omega_{ij}.$$
	where $\mathcal{S}^{\delta t}_{ij}$ is the microscale solution operator defined in $\Omega_{ij}$; this is also dependent on $U^n$ through the constraints. For the boundary, a periodic boundary condition, $u^{n,m}-u^{n,0}=u^{n,m}-RU^n$ is $\Omega_{ij}$ periodic, is used. 
	
	\item Estimate the effective flux
	$$P(U)_{ij}=\mathcal{D}_{ij}(u^{n,0}_{ij},u^{n,1}_{ij},...,u^{n,M}_{ij})$$
	where $\mathcal{D}_{ij}$ is some data processing operator, which in general involves spatial and temporal averaging; this is sometimes referred to as the data estimator.
	\item Evolve the macro variables for one macro time step $\Delta t$ using the macro solver $S^{\Delta t}$:
	$$U^{n+1}=S^{\Delta t}(U^n;\{P(U)_{kl}\})$$
\end{enumerate}
In \cite{FDHMM}, it is shown for the periodic homogenization case the effective flux by the above HMM procedure approximates the homogenized flux with an approximation error of order $\mathcal{O}(\delta)$. 

If the velocity $\mathbf{v}(\mathbf{x})$ is well scale-separated with a periodicity $\epsilon$, that is, $\mathbf{v}(\mathbf{x})=\mathbf{v}(\mathbf{x}/\epsilon), 0<\epsilon\ll 1$, the size of the local domain $\Omega_{ij}$ is usually chosen to be a larger than a few multiples of the periodicity $\epsilon$ but still smaller than the macroscale grid spacing. For general flows which contains a wide range of scales, HMM requires local domains as large as the macroscale spacing in which they cover the whole domain with the fine grid. Also, if there is much finer scales than the local domain size, that is $\mathbf{v}(\mathbf{x})=\mathbf{v}(\mathbf{x}/\epsilon,\mathbf{x}/\epsilon^2)$, the local microscale simulations are also another multiscale problems which requires much finer grid points. 

For the passive advection-diffusion equation, as the homogenization theory implies, the effective flux $P(U)$ is a linear map of $\nabla U$, $\mathcal{K}\nabla U$. Thus for multiscale velocity field without scale separation, the main focus is how to approximate the effective diffusivity without using the finest grid to resolve the velocity field in the local domains covering the whole domain. One of the ideas of the seamless method descried in the next section is to use iterated homogenization after decompositions of the velocity fields.
The effective diffusivity of a single component is approximated by the matrix representation of the effective flux map from applying the HMM technique. More precisely, we solve the microscale problems with two different reconstructions so that the two different initial conditions have non-parallel gradients. To prevent ill-conditioning of the inversion of the effective flux map, we choose two reconstructions so that the initial conditions have gradients only in $x$ or $y$ directions (that is, the gradients of the two initial conditions are orthogonal). This process, which is sometimes called numerical homogenization, is a building block of our seamless method and we denote the effective diffusivity from a velocity field $\mathbf{v}$ with a base diffusivity matrix $K$ in a local domain $D$  by $\mathcal{K}[\psi,K;D]$. For simple exposition of the seamless algorithm, we will describe the method for the stream function $\psi$ of a zero-mean velocity field. 

\section{Seamless Heterogeneous Multiscale Method (SHMM)}\label{sec:SHMM}
In this section we describe the Seamless Heterogeneous Multiscale Method (SHMM) for \eqref{eq:model} without scale separation in the stream function and thus the velocity field. As in HMM, SHMM uses a macroscopic discretization $\{X_{ij}\}$ to represent the large-scale solution. To evolve the large-scale solution, SHMM calculates the effective flux in local domains $\Omega_{ij}$ centered at $X_{ij}$ with vertices at $X_{i\pm1,j\pm1}$ (see Figure \ref{fig:localdomains}). Up to this point, the only difference between the discretization of HMM and SHMM is the local domains in which for SHMM they overlap each other and cover the whole domain. 

To reduce the computational cost SHMM uses a hierarchy of different grid points along with decomposition of the stream function in the Fourier domain so that different grid points can be used to resolve different components of the stream function. The number of levels of local domains (or the first fine level) $\Omega^1_{ij}=\Omega_{ij}$ which is as large as the original local domain uses grids that are finer than the global macro grid but typically coarser than the finest scale of the velocity field. For the next level local domain $\Omega^2_{ij}$, we decrease the grid spacing so that finer scales can be resolved. We also decrease the domain size by the same factor as the grid spacing so that it has the same computational complexity of the first level discretization in $\Omega_1$ (we keep the center of the local domains at $X_{ij}$). The level of local domains is obviously determined by the finest scale of the velocity field (see Figure \ref{fig:localdomains} for three different levels of local domains at $X_{ij}$).
\begin{figure}[h]
	\centering
	\includegraphics[width=1\textwidth]{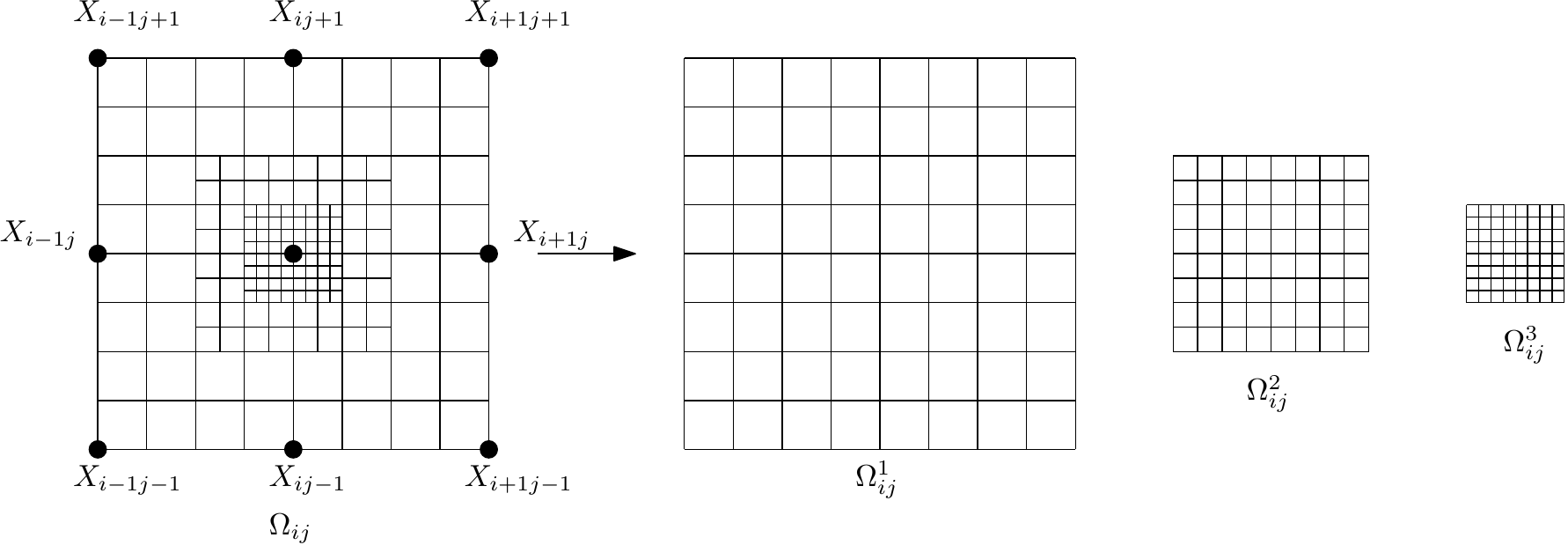}
	\caption{Hierarchy of local domains to resolve different scale components of stream function.}
	\label{fig:localdomains}
\end{figure}

Under the above setting we will focus on calculating the the effective diffusivity in a local domain $\Omega_{ij}$ centered at a macro grid point $X_{ij}$. In other words, using the notation in the last section, we focus on calculating $\mathcal{K}[\psi,\kappa;\Omega_{ij}]$. First we show the decomposition of the stream function into different scale components in Section \ref{subsec:decomposition}. In Section \ref{subsec:shmmdetail}, the seamless algorithm is presented to calculate the effective diffusivity $\mathcal{K}[\psi,\kappa;\Omega_{ij}]$ using hierarchical calculation of effective diffusivities from each component of the stream function. For simplicity of notations, we will suppress the indices $ij$ hereafter.

\subsection{Decomposition of stream function}\label{subsec:decomposition}
Different wavenumbers require different resolutions to resolve them with comparable accuracy. Thus the basic idea is to decompose the stream function in Fourier space. Let us point out another motivation for the decomposition in Fourier space. One of the key ideas of SHMM is to use iterated homogenization of well-separated scale components. In general, the well-separated condition is not applicable to continuous spectrum stream functions. But for shear flows, the following proposition implies that decomposition in the Fourier domain can impose the effect of scale separation so that they can be treated as well-separated in iterated homogenization.

\begin{proposition}\label{prop:ADVDIF:l2ortho}
Let $\mathbf{v_1}$ and $\mathbf{v_2}$ are two mean zero shear velocity fields in the same direction, say $x_1$, whose mean zero stream functions $\psi_1$ and $\psi_2$ are orthogonal in $L^2([0,1])$. Then the effective diffusivity from $\mathbf{v}_1+\mathbf{v}_2$ is equal to the iterated effective diffusivity by assuming that one of $\mathbf{v}_i,i=1,2$ has well-separated smaller scales than the other one. That is,
\begin{equation}
\mathcal{K}[\psi_1+\psi_2,\kappa]=\mathcal{K}[\psi_1,\mathcal{K}[\psi_2,\kappa]]=\mathcal{K}[\psi_2,\mathcal{K}[\psi_1,\kappa]]
\end{equation}
\end{proposition}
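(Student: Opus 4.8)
The plan is to reduce both sides to the closed-form shear diffusivity \eqref{eq:shearhomoY} and to observe that, for shears sharing a direction, the enhancement depends only on the $L^2$ norm of the stream function, a quantity that is additive under orthogonality. Since the common direction is $x_1$, both $\psi_1$ and $\psi_2$ depend only on $y_2$, and for a diagonal base diffusivity $K=\mathrm{diag}(\kappa_1,\kappa_2)$ formula \eqref{eq:shearhomoY} reads
\begin{equation}
\mathcal{K}[\psi,K]=\begin{pmatrix}\kappa_1+\tfrac{1}{\kappa_2}\int_0^1|\psi|^2\,dy_2 & 0\\ 0 & \kappa_2\end{pmatrix}.
\end{equation}
The first thing to record is that this map preserves diagonality and leaves the $(2,2)$-entry --- the diffusivity in the direction in which $\psi$ varies --- unchanged. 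Hence the formula may be applied a second time to its own output, and the denominator $\kappa_2$ that governs the enhancement of the $(1,1)$-entry never moves from its initial value $\kappa$.

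With that in hand I would compute the iterated diffusivity. Starting from $K=\kappa I$, the inner step gives $\mathcal{K}[\psi_2,\kappa]=\mathrm{diag}\!\left(\kappa+\tfrac1\kappa\int_0^1|\psi_2|^2,\ \kappa\right)$. Feeding this diagonal matrix as the base for the $\psi_1$-homogenization and using that its $(2,2)$-entry is still $\kappa$, the outer step produces
\begin{equation}
\mathcal{K}[\psi_1,\mathcal{K}[\psi_2,\kappa]]=\begin{pmatrix}\kappa+\tfrac1\kappa\int_0^1|\psi_1|^2+\tfrac1\kappa\int_0^1|\psi_2|^2 & 0\\ 0 & \kappa\end{pmatrix}.
\end{equation}
This matrix is symmetric in $\psi_1$ and $\psi_2$, so exchanging the order of iteration returns the same result; that settles the second equality in the statement immediately.

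It then remains to match this against the direct homogenization of the sum. As $\psi_1+\psi_2$ is again a shear stream function in the same direction, \eqref{eq:shearhomoY} gives the enhanced $(1,1)$-entry $\kappa+\tfrac1\kappa\int_0^1|\psi_1+\psi_2|^2$. Expanding the square,
\begin{equation}
\int_0^1|\psi_1+\psi_2|^2\,dy_2=\int_0^1|\psi_1|^2+\int_0^1|\psi_2|^2+2\int_0^1\psi_1\psi_2\,dy_2,
\end{equation}
the orthogonality hypothesis $\int_0^1\psi_1\psi_2\,dy_2=0$ annihilates the cross term, and what survives is exactly the enhancement produced by the iteration. This identifies the direct and iterated diffusivities and completes the argument.

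The computation is elementary, so the real content is conceptual, and it is worth flagging where the two hypotheses do their work. That the shears share a direction is what pins the variation-direction diffusivity at $\kappa$ and keeps the base diagonal, so that the closed form \eqref{eq:shearhomoY} can be reapplied verbatim at the second step without the denominator drifting; I would check this reapplication explicitly, as it is the only place the argument could quietly break. Orthogonality, in turn, is precisely the condition that removes the cross term $\tfrac{2}{\kappa}\int_0^1\psi_1\psi_2\,dy_2$, and this cross term is exactly the fine-scale interaction that scale-separated iterated homogenization neglects. Thus the proposition is really the statement that decomposing into orthogonal Fourier modes deletes that interaction and thereby makes the iteration exact --- which is the motivation advanced just before the statement.
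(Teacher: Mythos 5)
Your proof is correct and follows essentially the same route as the paper's: apply the closed-form shear diffusivity twice, note that the base-diffusivity entry in the denominator is unchanged by the first homogenization step, and use orthogonality to kill the cross term in $\int|\psi_1+\psi_2|^2$. The only difference is that you work with the transposed convention (\eqref{eq:shearhomoY} instead of \eqref{eq:shearhomoX}), which is immaterial, and you are somewhat more explicit than the paper about why the formula can be reapplied to its own output.
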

\begin{proof}
From \eqref{eq:shearhomoX},
$$\mathcal{K}[\psi_2,\kappa]=\begin{pmatrix}\kappa_1&0\\0&\kappa_2+\frac{1}{\kappa_1}\int|\psi_2|^2dx_1\end{pmatrix}$$
We apply (\ref{eq:shearhomoX}) with $\mathcal{K}[\psi_2,\kappa]$ as the base diffusivity which yields
$$\mathcal{K}[\psi_1,\mathcal{K}[\psi_2,\kappa]]=\begin{pmatrix}\kappa_1&0\\0&\kappa_2+\frac{1}{\kappa_1}\int|\psi_2|^2dx_1+\frac{1}{\kappa_1}|\psi_1|^2dx_1\end{pmatrix}=\mathcal{K}[\psi_2,\mathcal{K}[\psi_1,\kappa]].$$
On the other hand, from the orthogonality of $\psi_1$ and $\psi_2$ in $L^2$, we have
\begin{equation}
\begin{split}
\mathcal{K}[\psi_1+\psi_2,\kappa]=&\begin{pmatrix}\kappa_1&0\\0&\kappa_2+\frac{1}{\kappa_1}\int|\psi_1+\psi_2|^2dx_1\end{pmatrix}\\
=&\begin{pmatrix}\kappa_1&0\\0&\kappa_2+\frac{1}{\kappa_1}\int|\psi_2|^2dx_1+\frac{1}{\kappa_1}|\psi_1|^2dx_1\end{pmatrix}\\
=&\mathcal{K}[\psi_1,\mathcal{K}[\psi_2,\kappa]]
\end{split}
\end{equation}
\end{proof}

Based on the above proposition, we decompose the stream function in Fourier space with a decomposition factor $\alpha\in\mathbb{N}$. First, we decompose the stream function $\psi$ into two parts $\psi_1$ and $\psi_2$ in Fourier space
\begin{equation}
\psi=\psi_1+\psi_2
\end{equation}
where $\psi_2$ has only the wave numbers larger than $\alpha$ in both direction (see Figure \ref{fig:ADVDIF:decomp1})
$$\hat{\psi}_{2}(\mathbf{k})=\left\{\begin{array}{ll}\hat{\psi}(\mathbf{k})&\textrm{ if }|k_1|,|k_2|> \alpha\\0&\textrm{ otherwise}\end{array}\right.$$
and thus we use a fine grid in a local domain to resolve this component.
\begin{figure}[h]
	\centering
	\subfloat[Decomposition of the stream function into two parts\label{fig:ADVDIF:decomp1}]{\includegraphics[width=.45\textwidth]{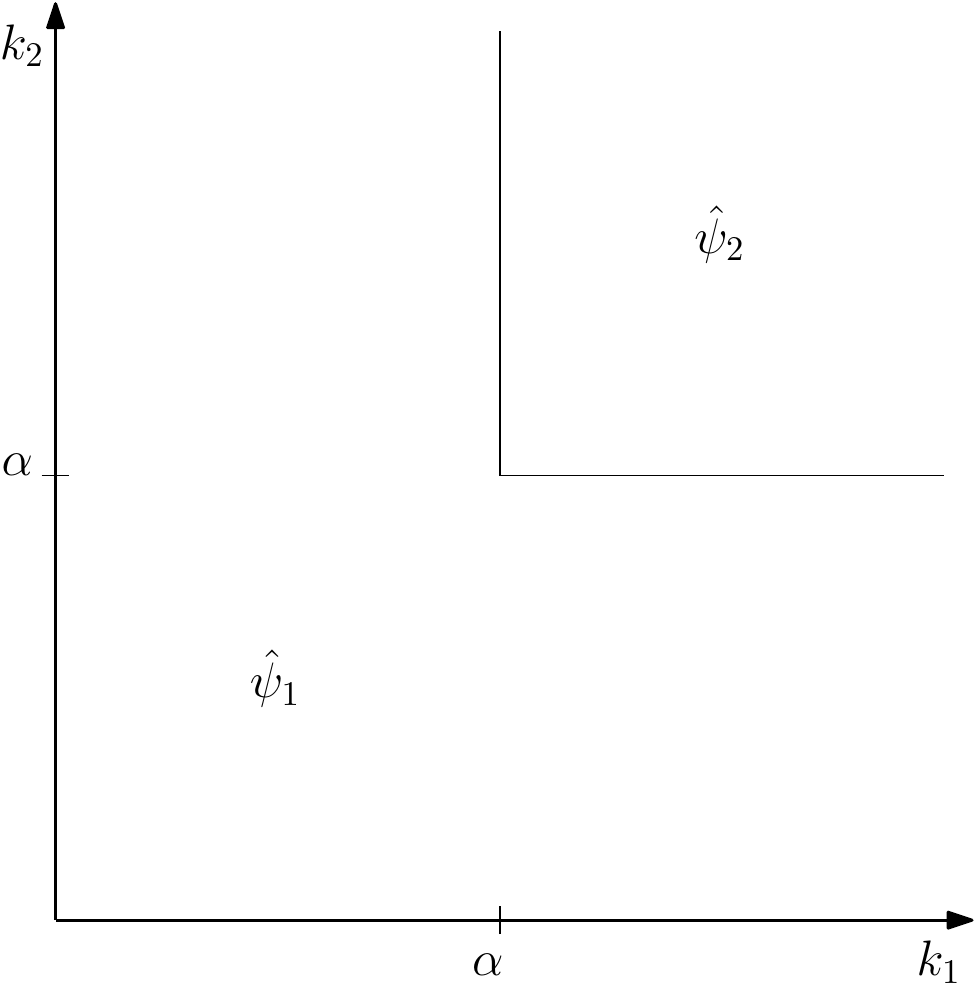}}
        \subfloat[Decomposition of $\psi_{1}$ into three parts\label{fig:ADVDIF:decomp2}]{\includegraphics[width=.45\textwidth]{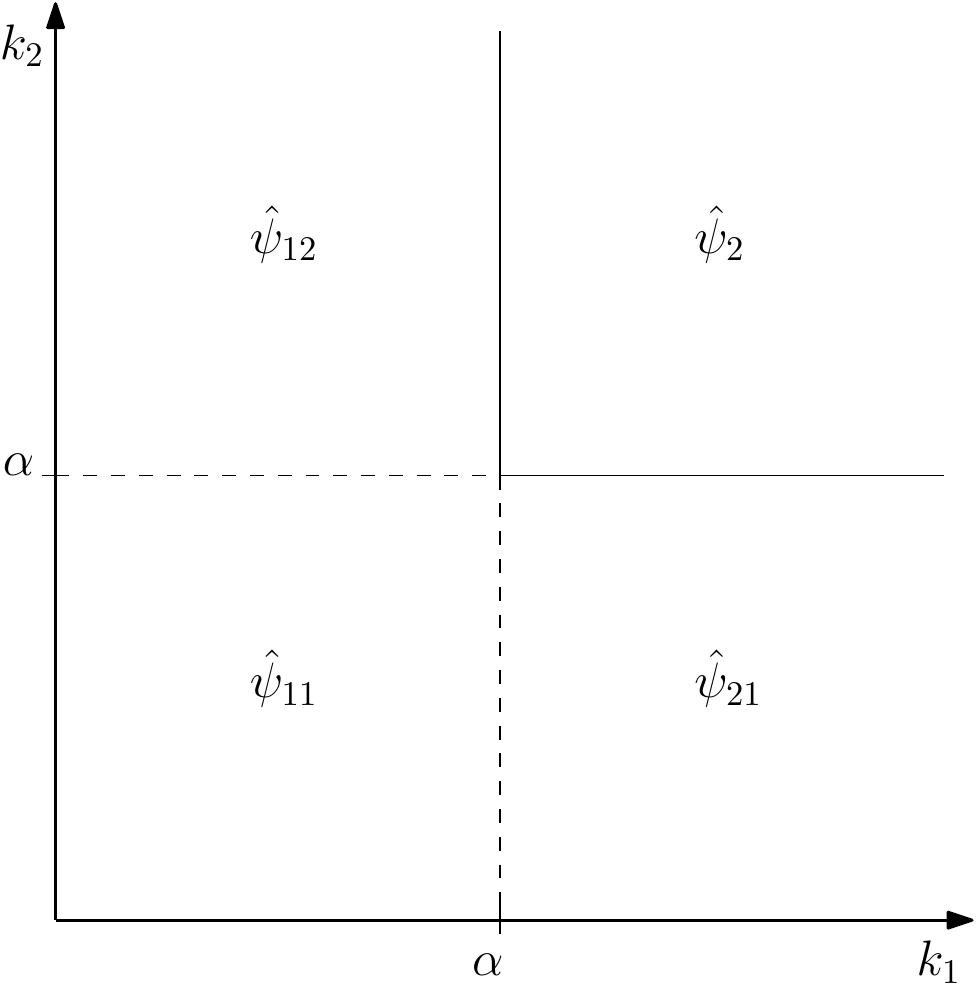}}
        \caption{Decomposition of the stream function in the Fourier domain.}\label{fig:ADVDIF:decomp}
\end{figure}
 On the other hand, $\psi_1$ contains low and high wave numbers in each direction and thus we need to decompose $\psi_1$ further so that each component can be resolved efficiently. 
 $\psi_{1}$  is decomposed into three different parts so that one part contains only the low wave numbers and the other parts contain high wave numbers only in one direction (see Figure \ref{fig:ADVDIF:decomp2}), that is,
$$\psi_1=\psi_{11}+\psi_{12}+\psi_{21}$$
where
\begin{align}\hat{\psi}_{11}(\mathbf{k})=&\left\{\begin{array}{ll}\hat{\psi}(\mathbf{k})&\textrm{if }|k_1|,|k_2|\leq\alpha\\0&\textrm{otherwise}\end{array}\right.\\
\hat{\psi}_{12}(\mathbf{k})=&\left\{\begin{array}{ll}\hat{\psi}(\mathbf{k})&\textrm{if }|k_1|\leq\alpha,|k_2|> \alpha\\0&\textrm{otherwise}\end{array}\right.\\
\hat{\psi}_{21}(\mathbf{k})=&\left\{\begin{array}{ll}\hat{\psi}(\mathbf{k})&\textrm{if }|k_1|> \alpha,|k_2|\leq \alpha\\0&\textrm{otherwise}\end{array}\right.
\end{align}
The coarse diagonal component, $\psi_{11}$, has only low wave numbers and thus can be well resolved using a coarse grid.
On the other hand, the two off-diagonal components, $\psi_{21}$ and $\psi_{12}$, have high wave numbers only in one direction $x_1$ and $x_2$ respectively and thus require a fine grid only in one direction. But the off-diagonal parts are not resolved using find grids. Instead, SHMM, for additional computational savings, uses the analytic formula \eqref{eq:shearhomoX} and \eqref{eq:shearhomoY}, the effective diffusivity of shear flows, by treating the off-diagonal components as shear flows. This process can be repeated recursively for the diagonal component. If $\psi_{2}$ has wave numbers larger than $\alpha^2$, we repeat the same decomposition process as we do for $\psi$ with the same decomposition factor $\alpha$ (see Figure \ref{fig:ADVDIF:decomp3} for iterated decomposition of the stream function). 
\begin{figure}
	\centering
	\includegraphics[width=.45\textwidth]{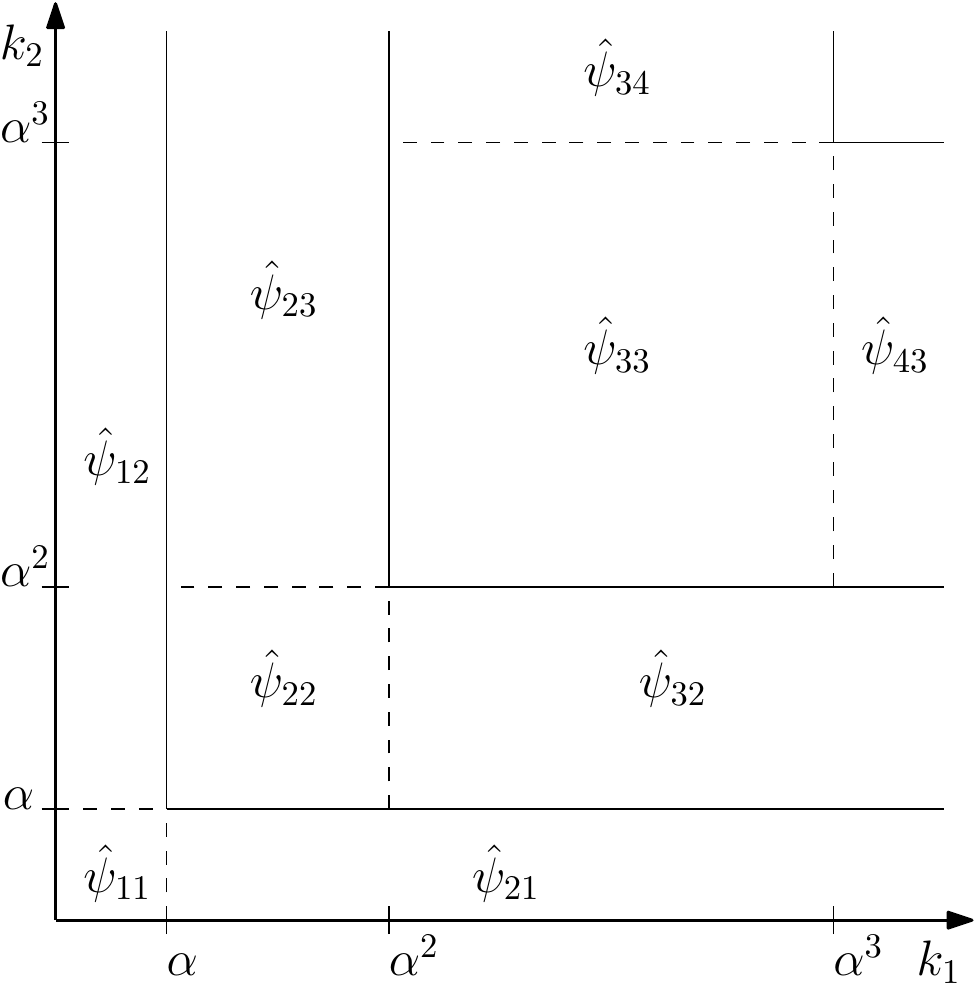}
        \caption{Repeated decomposition of the diagonal velocity fields.}\label{fig:ADVDIF:decomp3}
\end{figure}

\subsection{Hierarchical Calculation of the Effective Diffusivity}\label{subsec:shmmdetail}
Without loss of generality, we consider  a seamless algorithm for calculating the effective diffusivity from a two level decomposition of the stream function $\psi=\psi_{11}+\psi_{12}+\psi_{21}+\psi_{22}$. 
The main idea of the algorithm is to use the iterated homogenization from the finest scale to the largest scale. For example, if the stream function has only the diagonal components, $\psi_{11}$ and $\psi_{22}$, we first calculate the effective diffusivity from $\psi_{22}$, $\mathcal{K}^*=\mathcal{K}[\psi_{22},\kappa;\Omega_2]$ and then the total effective diffusivity of $\psi$, $\mathcal{K}[\psi,\kappa;\Omega_1]$ is given by $\mathcal{K}[\psi_{11},\mathcal{K}^*;\Omega_1]$. Unfortunately, the decomposition does not provide totally independent components. The off-diagonal components, $\psi_{12}$ and $\psi_{21}$, have high wave numbers interacting with $\psi_{22}$. We sacrifice the full interactions between the fine scales of the off-diagonal and diagonal components and instead capture weak interaction between them. 

The idea is to use the effective diffusivity $\mathcal{K}[\psi_{22},\kappa]$ from $\psi_{22}$ as the base diffusivity in the calculation of $\psi_{11}+\psi_{12}+\psi_{21}$ with an preprocessing step in the calculation of the effective diffusivity from $\psi_{22}$. We account for the effect of the off-diagonal components to $\psi_{22}$ by setting $\mathcal{K}[\psi_{12}+\psi_{21},\kappa]$ as the base diffusivity, that is, $\mathcal{K}[\psi_{22},\kappa]$ is replaced by $\mathcal{K}[\psi_{22},\mathcal{K}[\psi_{12}+\psi_{21},\kappa]]$

Before we discuss how to calculate the effect of $\psi_{22}$, we first explain how to calculate $\mathcal{K}[\psi_{12}+\psi_{21},\kappa]$ (which can be easily extended to the case of $\mathcal{K}[\psi_{11}+\psi_{12}+\psi_{21},\kappa]$). To expedite the calculation of $\mathcal{K}[\psi_{12}+\psi_{21},\kappa]$, we use the effective diffusivity of $\psi_{12}$ and $\psi_{21}$ separately. With this separation between the off-diagonal components, we use the analytic formula \eqref{eq:shearhomoX} and \eqref{eq:shearhomoY} by treating $\psi_{21}$ and $\psi_{12}$ as stream functions of shear flows.
More specifically, in the calculation of $\mathcal{K}[\psi_{12}]$, for example, we first use the analytic formula \eqref{eq:shearhomoY} for all vertical lines passing through each coarse grid points in $\Omega_1$ to homogenize the high wave numbers in the $x_2$ direction. This procedure gives effective diffusivity as a function of $x_1$ only, say $\mathcal{K}^{*}(x_1)$. Thus, the next homogenization in the $x_1$ direction is given by the reciprocal of the harmonic average $\left(\int \frac{1}{\mathcal{K}^{*}(x_1)}\right)^{-1}$ which comes from the homogenization of layered material (see \cite{BLP} or \cite{RusHomo}).

\begin{algorithm}{Effective diffusivity from each off-diagonal component}
\label{alg:offdiagonal}
\end{algorithm}
Without loss of generality, we consider $\mathcal{K}[\psi_{12},\kappa]$.
\begin{enumerate}
\item For each vertical line passing spaced by the coarse mesh size in $\Omega_1$, calculate the effective diffusivity using \eqref{eq:shearhomoY} which yields effective diffusivity $\mathcal{K}^{*}(x_1)$ as a function of $x_1$.
\item The effective diffusivity of $\psi_{12}$ is given by 
$$\mathcal{K}[\psi_{12},\kappa]=\left(\int \frac{1}{\mathcal{K}^{*}(x_1)}\right)^{-1}.$$
\end{enumerate}

Both $\mathcal{K}[\psi_{12},\kappa]$ and $\mathcal{K}[\psi_{21},\kappa]$ have effects from $\kappa$ in addition to the net effect from $\psi_{12}$ and $\psi_{21}$. Thus we subtract the repeated effect from $\kappa$ and the total effective diffusivity from $\psi_{12}$ and $\psi_{21}$ is given by the following algorithm

\begin{algorithm}{Effective diffusivity from the all off-diagonal components, $\mathcal{K}[\psi_{12}+\psi_{21},\kappa]$}
\label{alg:sumofoffdiagonal}
\end{algorithm}
\begin{enumerate}
\item Calculate $\mathcal{K}_{12}=\mathcal{K}[\psi_{12},\kappa]$ and $\mathcal{K}_{21}=\mathcal{K}[\psi_{21},\kappa]$ using Algorithm \ref{alg:offdiagonal}.
\item $\mathcal{K}[\psi_{12}+\psi_{21},\kappa]$ is given by the sum of the effects from $\psi_{12}$ and $\psi_{21}$ subtracted by the common base diffusivity $\kappa$
\begin{equation}
\mathcal{K}[\psi_{12}+\psi_{21},\kappa]=\mathcal{K}[\psi_{12},\kappa]+\mathcal{K}[\psi_{21},\kappa]-\kappa I_2
\end{equation}
where $I_2$ is the $2\times 2$ identity matrix.
\end{enumerate}

Let us now describe the comprehensive algorithm to calculate the effective diffusivity from all components $\psi=\psi_{11}+\psi_{12}+\psi_{21}+\psi_{22}$. As mentioned above, we use $\mathcal{K}[\psi_{12}+\psi_{21},\kappa]$ as the base diffusivity in the calculation of the effective diffusivity from $\psi_{22}$ to account for the effect of the off-diagonal components.
\begin{algorithm}{Effective diffusivity from all components, $\mathcal{K}[\psi;\Omega_1]$}\label{alg:total}
\end{algorithm}
\begin{enumerate}

\item Calculate $\mathcal{K}_{off}=\mathcal{K}[\psi_{12}+\psi_{21},\kappa;\Omega_1]$ using Algorithm \ref{alg:sumofoffdiagonal}.

\item Calculate $\mathcal{K}^*=\mathcal{K}[\psi_{22},\mathcal{K}_{off};\Omega_2]$ with a base diffusivity $\mathcal{K}_{off}$ from the previous step.

\item Let $\mathcal{K}^{*}_{net}$ be given by
$$\mathcal{K}^{*}_{net}=\mathcal{K}^*-\mathcal{K}_{off}+\kappa I_2$$
which has the effect from $\psi_{22}$ without the effects from $\psi_{12}$ and $\psi_{21}$.
The total effective diffusivity $\mathcal{K}[\psi_1+\psi_2,\kappa]$ is given by
\begin{equation}
\mathcal{K}[\psi,\kappa;\Omega_1]=\mathcal{K}[\psi_{11}+\psi_{12}+\psi_{21},\mathcal{K}^{*}_{net};\Omega_1]
\end{equation}
\end{enumerate}
$\mathcal{K}^*$ in the third step of Algorithm \ref{alg:total} is to include the effect of $\psi_{22}$ in the calculation of the effective diffusivity of the off-diagonal components. One may consider the reversed order to capture the (not necessarily full) interactions between the off-diagonal and diagonal $\psi_{22}$ components. But this approach requires two calculations of the effective diffusivity from $\psi_{22}$ with two different base diffusivities. Because $\psi_{22}$ requires a simulation using grids, this approach has computational complexity that increases exponentially as the number of decomposition level increases.

\section{Numerical Examples}\label{sec:NUMERICAL}
We test SHMM for the passive advection-diffusion equation in various velocity fields with mean zero. A discontinuous initial condition (Figure \ref{fig:init}) is used for all tests with Dirichlet and periodic boundary conditions in $x_1$ and $x_2$ directions respectively:
\begin{equation}
\frac{\partial u^{\epsilon}(t,\mathbf{x})}{\partial t}+\mathbf{v}^{\epsilon}(\mathbf{x})\cdot\nabla u=\Delta u, \quad \mathbf{x}=(x_1,x_2)\in \Omega=[0,1]^2
\end{equation}
$$u(\mathbf{x},0)=\left\{\begin{array}{l}0,\quad x_1>\frac{1}{2}\\1\quad x_1\leq \frac{1}{2}\end{array}\right.$$
$$u(\mathbf{x},t)=\left\{\begin{array}{l}0,\quad x_1=1\\1\quad x_1=0\end{array}\right.$$
$$\textrm{periodic boundary condition in $y$-direction}.$$

\begin{figure}
\centering
\includegraphics[width=8cm]{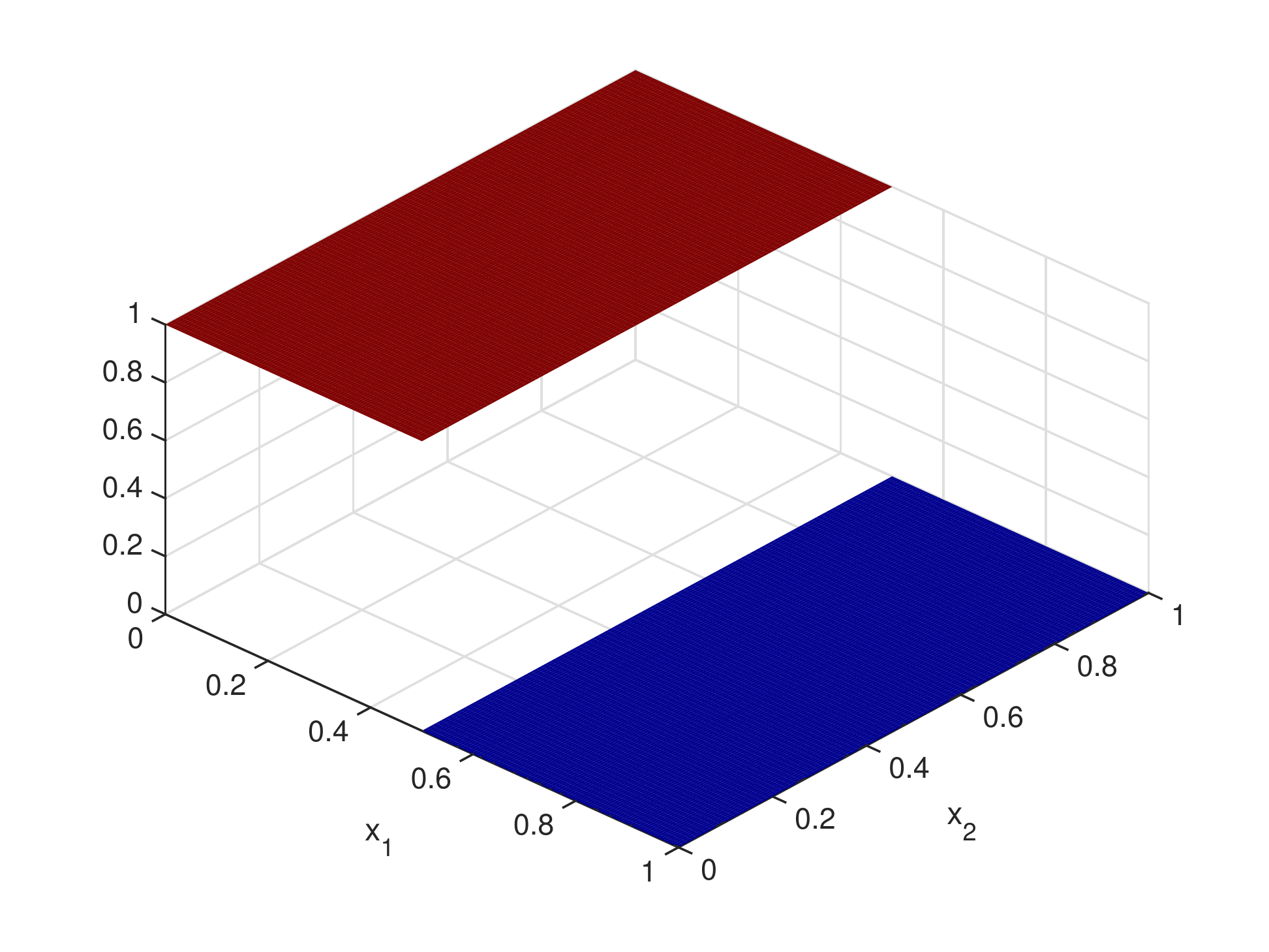}
\caption{Initial profile of $u$ for all tests.}\label{fig:init}
\end{figure}
As a reference solution to compare with SHMM solutions, direct numerical solutions (DNS) are computed using a second order finite difference scheme on $512 \times 512$ grid points in space and a second order Runge-Kutta with a time step $\Delta t=7.5\times 10^{-7}$. The 512 grid points guarantee that there are more than 10 grid points per wavelength even for the largest wavenumber of the stream function in the following tests. The solution from the pure diffusion without the velocity field (which is labeled as `No Advection' in all the numerical results below) are provided as a baseline to check the enhanced diffusion due to advection.

\subsection{Random shear flow}
The first test problem is a random shear flow. The stream function is randomly generated in $x_2$ direction with a maximum wavenumber 50. 
For shear flows, SHMM uses the analytic formula for the effective diffusivity and thus this experiment shows the validity of the homogenization theory. Using the analytic formula \eqref{eq:shearhomoY} the effective diffusivity is an anisotropic given as
\begin{equation}
\mathcal{K}[\psi,1]=\begin{pmatrix}
2.9994&0\\
0&1\\
\end{pmatrix}
\end{equation}
Figure \ref{fig:exp1b} shows the SHMM solution with this effective diffusivity at $t=0.1$ along with DNS solution and another DNS solution without advection (that is, without stream function). The SHMM solution is on top of the DNS solution while the pure diffusion without advection exhibits significantly less diffusion than the advection enhanced diffusion. Note that the DNS solution shows small scale fluctuations in the $x_2$ direction due to the multi scale nature of the stream function in the $x_2$ direction but the profile in the $x_1$ direction is smooth.

\begin{figure}
\centering
\subfloat[Randomly generated stream function\label{fig:exp1a}]{\includegraphics[width=0.45\textwidth]{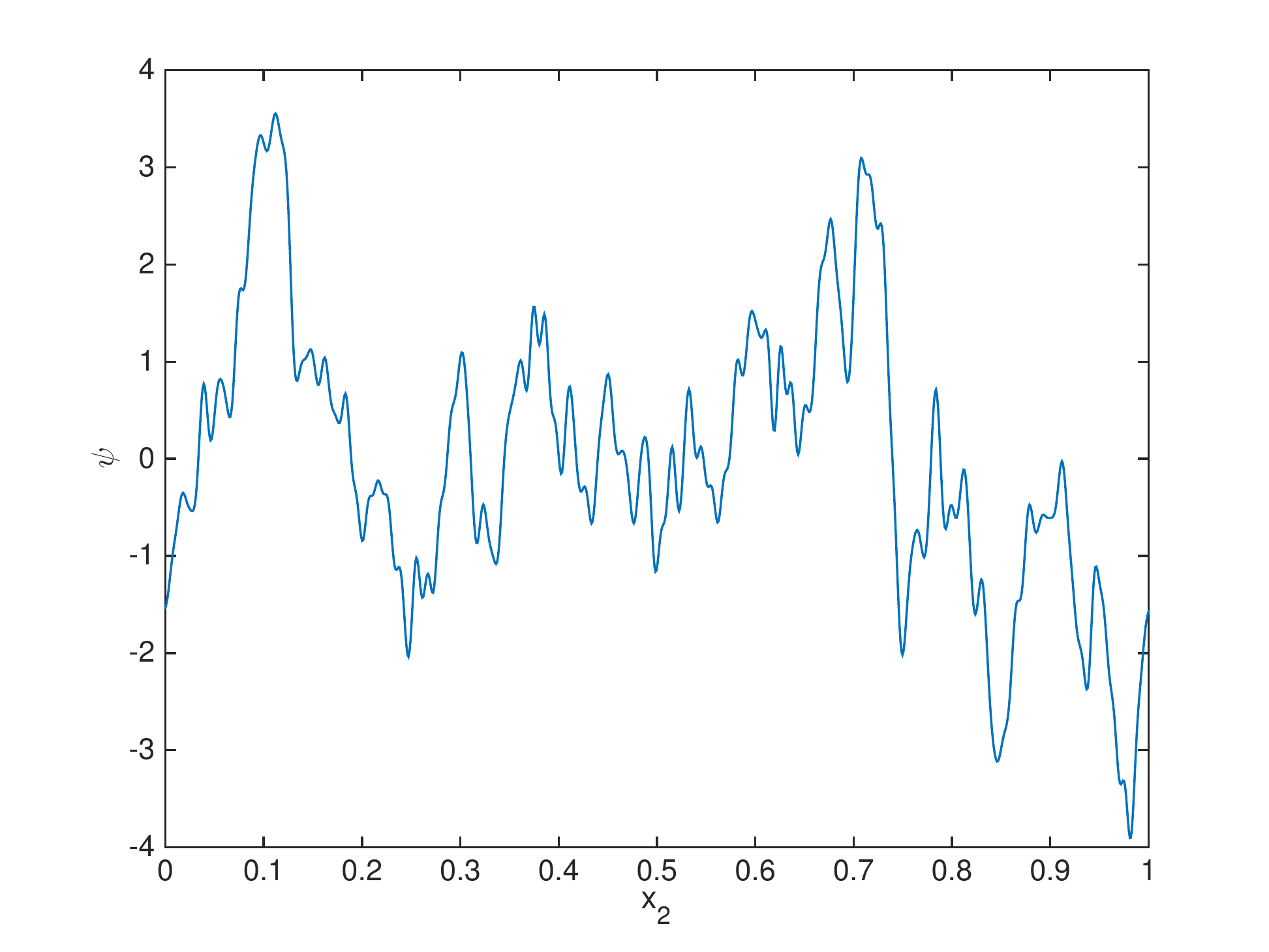}}
\subfloat[$u$ at $y=0.5$ and $t=0.1$\label{fig:exp1b}]{\includegraphics[width=0.45\textwidth]{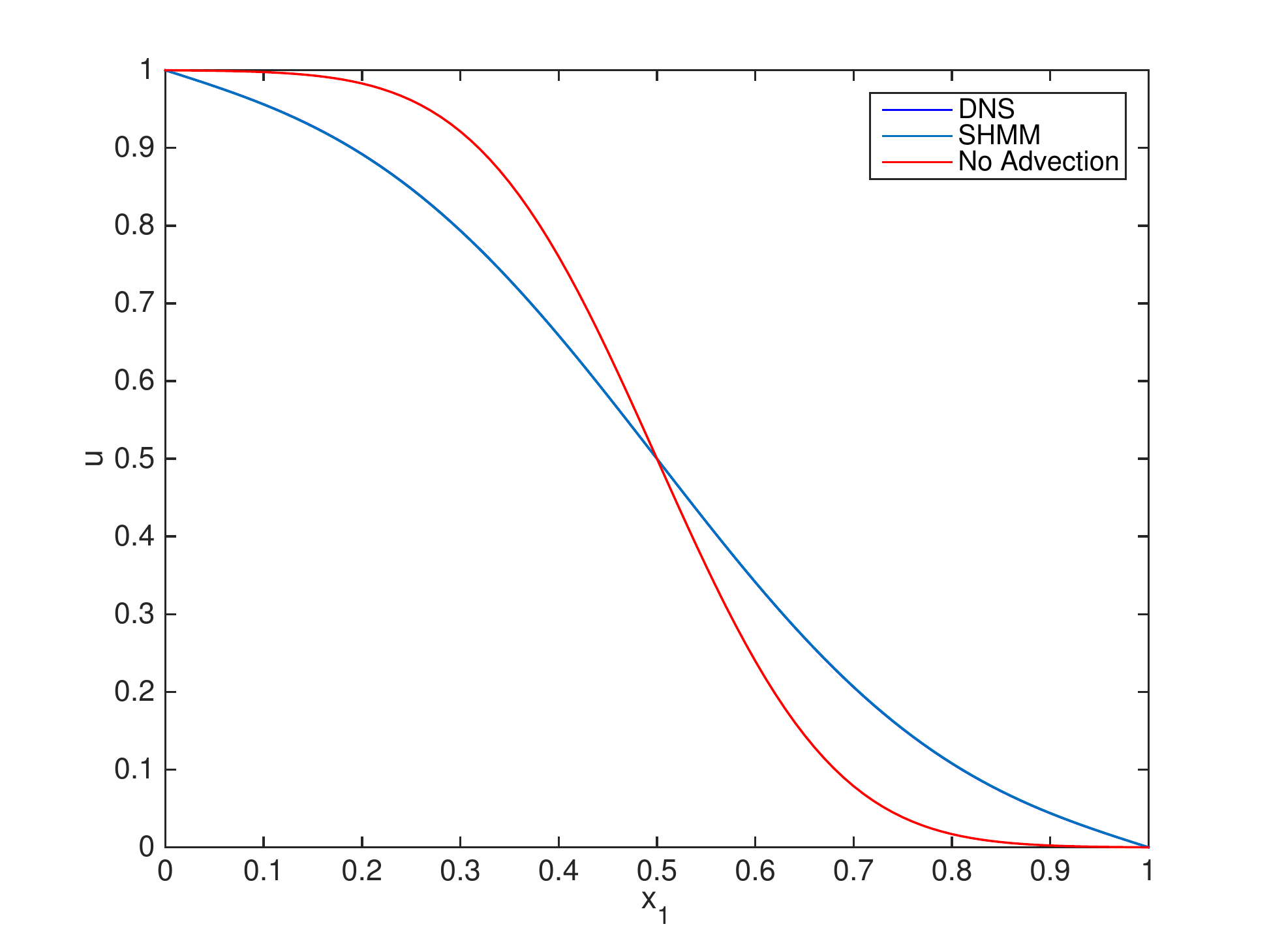}}
\caption{Randomly generated stream function and solutions by DNS, SHMM, and no advection at $y=0.5$ and $t=0.1$. The max wavenumber of the stream function is 50. SHMM solution is on the of the DNS solution.}
\end{figure}

\subsection{Stream function with two well-separated components}
The next test problem is a well-separated cellular flow which has only diagonal components in the decomposition of the stream function by SHMM,
$$\psi(\mathbf{x})=\frac{1}{3}(\phi_c(5x_1,5x_2)+\phi_c(25x_1,25x_2))$$
where
$$\phi_c(x_1,x_2)=\sin(2\pi x_1)\sin(2\pi x_2)$$
is the mean zero stream function of a cellular flow (see Figure \ref{fig:exp2a} for the contour line of the stream function). The decomposition factor here is $\alpha=5$ so that the decomposition gives two stream components, $\psi_{11}=\frac{1}{3}\phi_c(5x_1,5y_2)$ and $\psi_{22}=\phi_c(25x_1,25x_2)$ while the two off-diagonal components $\psi_{12}$ and $\psi_{21}$ are zero.

In this case, each component is periodic in the domain and well separated from each other. SHMM gives here a better approximation to the effective diffusivity than the shear flow case. See Table \ref{tab:ADVDIF:test2} for the effective diffusivity by DNS and SHMM. The relative errors of the effective diffusivity in each direction are less than $0.05\%$ which is on the order of numerical error by DNS.

\begin{table}
\begin{center}
\begin{tabular}{|c|c|c|}
\hline
&\textrm{DNS}&\textrm{SHMM}\\
\hline
\textrm{effective diffusivity}&$\begin{pmatrix}2.4801&0\\0&2.4805\end{pmatrix}$&$\begin{pmatrix}2.4798&0\\0&2.4793\end{pmatrix}$\\
\hline
\end{tabular}
\caption{Effective diffusivity from of the stream function with two separated components using DNS and SHMM}\label{tab:ADVDIF:test2}
\end{center}
\end{table}

\begin{figure}
\centering
\subfloat[Contour line of the stream function\label{fig:exp2a}]{\includegraphics[width=0.45\textwidth]{./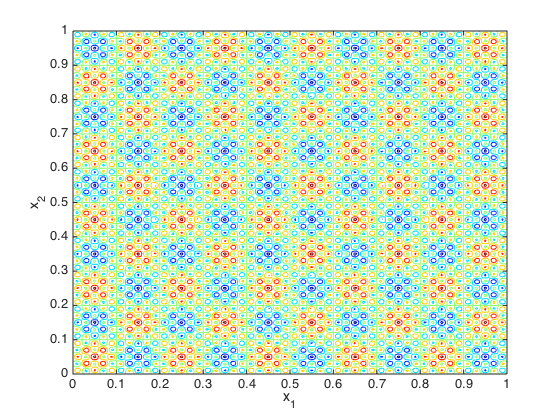}}
\subfloat[$u$ at $y=0.5$ and $t=0.1$\label{fig:exp2b}]{\includegraphics[width=0.45\textwidth]{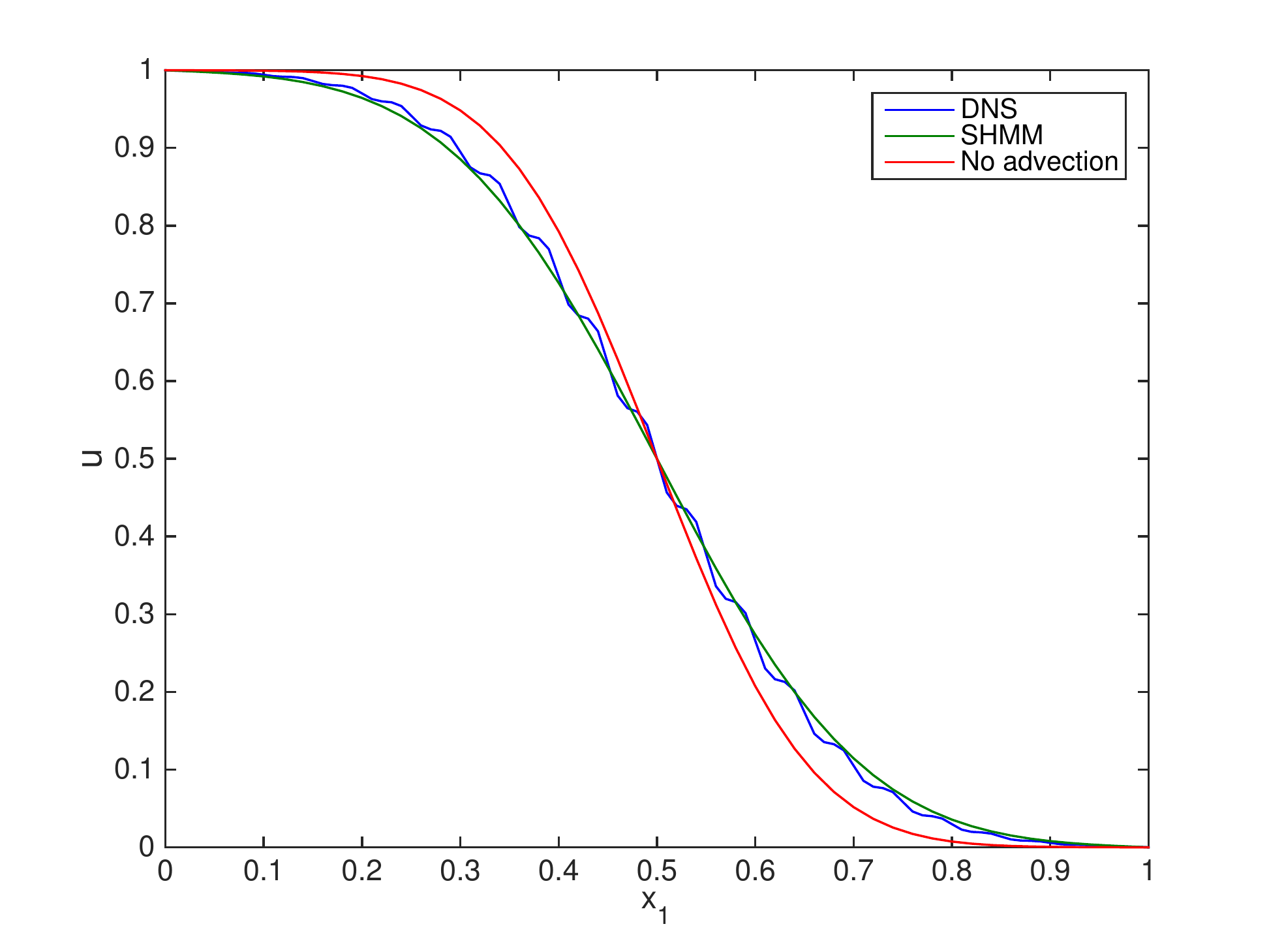}}
\caption{Stream function of two separated periodic components and solutions by DNS, SHMM, and no advection at $y=0.5$ and $t=0.1$.}
\end{figure}

\subsection{Stream function with off-diagonal components}
This test is to check the main source of SHMM errors, the partially resolved interactions between fine scales. The stream function in this test has diagonal and off-diagonal parts
$$\psi(x_1,x_2)=\frac{1}{5}\left(\phi_c(5x_1,5x_2)+\phi_c(5x_1,45x_2)+\phi_c(45x_1,5x_2)+\phi_c(50x_1,50x_2)\right)$$
where each component is of cellular type (see Figure \ref{fig:exp3a} for the contour line of the stream function). If we use the same decomposition factor $\alpha=5$ as before, this stream function contains four components
\begin{eqnarray}
\nonumber\psi_{11}&=&\frac{1}{5}\phi_c(5x_1,5x_2),\\
\nonumber\psi_{12}&=&\frac{1}{5}\phi_c(5x_1,45x_2),\\
\nonumber\psi_{21}&=&\frac{1}{5}\phi_c(45x_1,5x_2),\\
\nonumber\psi_{22}&=&\frac{1}{5}\phi_c(50x_1,50x_2).
\end{eqnarray}
Each component has non-trivial connections with each other except the pair $\psi_{11}$ and $\psi_{22}$ which we numerically verified  in the previous numerical test. The two diagonal parts $\psi_{11}$ and $\psi_{22}$ are periodic in each local domain $\Omega_{1}$ and $\Omega_{2}$ and the off-diagonal parts utilize analytic formula for effective diffusivity. Thus, the main source of error of this test problem is the partially resolved interactions between the off-diagonal and the diagonal $\psi_{22}$ components.
\begin{figure}
\centering
\subfloat[Contour line of the stream function\label{fig:exp3a}]{\includegraphics[width=0.45\textwidth]{./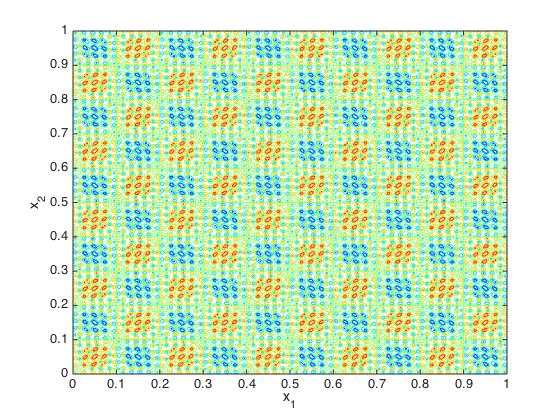}}
\subfloat[$u$ at $y=0.5$ and $t=0.1$\label{fig:exp3b}]{\includegraphics[width=0.45\textwidth]{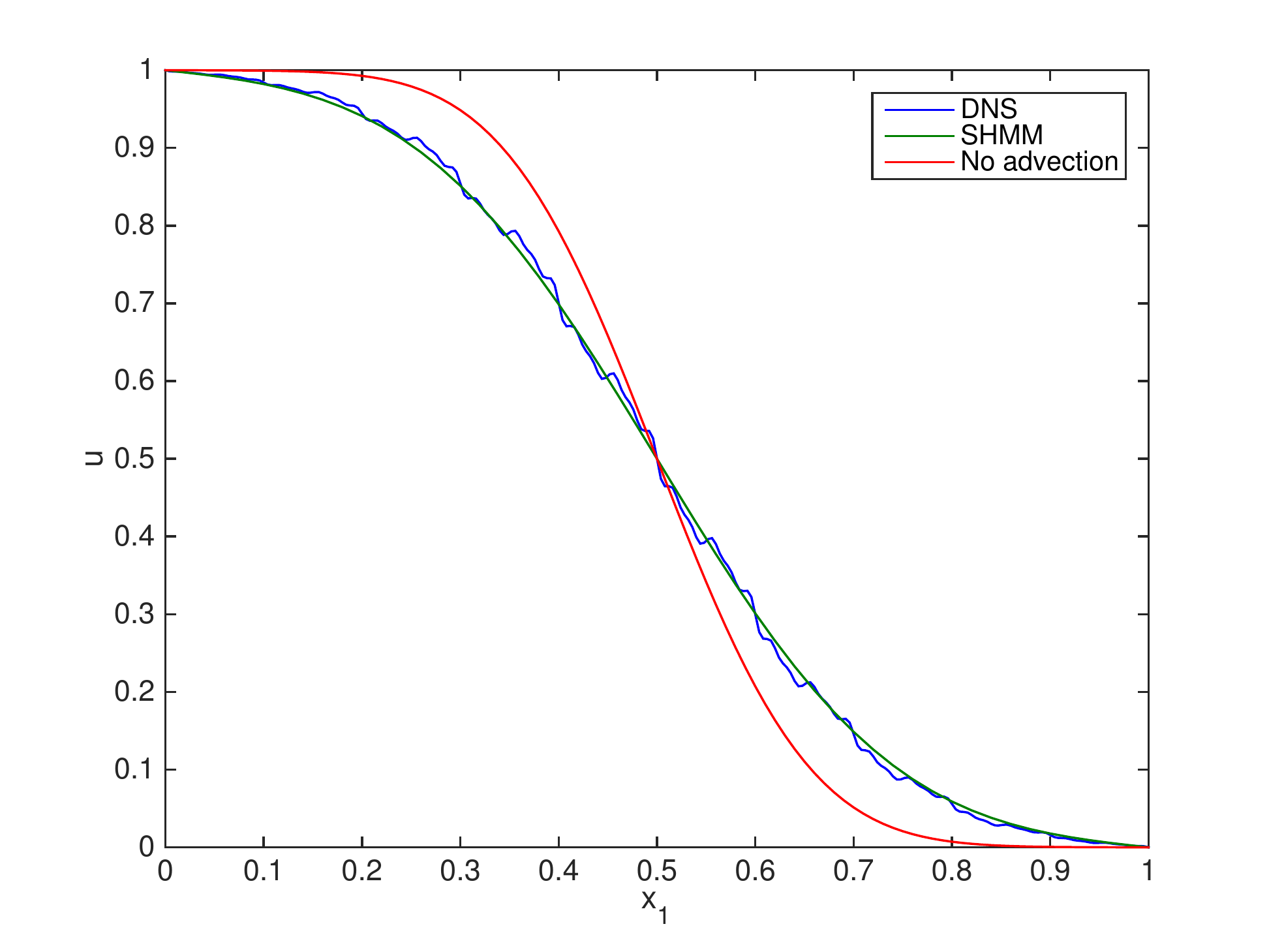}}
\caption{Stream function with off-diagonal components and solutions by DNS, SHMM, and no advection at $y=0.5$ and $t=0.1$.}\label{fig:exp3}
\end{figure}

 The numerically calculated effective diffusivity is in Table \ref{tab:ADVDIF:test3}. The effective diffusivity by SHMM in each direction have relative errors of order 5\%. Figure \ref{fig:exp3b} shows the solutions by DNS, SHMM and DNS without advection at $y=0.5$ and $t=0.1$. As in the previous test with two separated cellular flow case, SHMM solution is within the fluctuations of the DNS solution.
 \begin{table}
\begin{center}
\begin{tabular}{|c|c|c|}
\hline
&\textrm{DNS}&\textrm{SHMM}\\
\hline
\textrm{effective diffusivity}&$\begin{pmatrix}2.7592&0\\0&2.7593\end{pmatrix}$&$\begin{pmatrix}2.5981&0\\0&2.6439\end{pmatrix}$\\
\hline
\end{tabular}
\caption{Effective diffusivity of the stream function with off-diagonal components using DNS and SHMM}\label{tab:ADVDIF:test3}
\end{center}
\end{table}

\subsection{Continuous Spectrum Case}
The last numerical test has a stream function with continuous spectrum. The stream function is randomly generated in $\Omega$ and wavenumbers larger than 50 are truncated. Further the Fourier coefficients are scaled at a rate of $\mathcal{O}(\frac{1}{k^3})$ which corresponds to the inertial range spectrum of two dimensional turbulence flow. Note that the dissipative range has a much steeper spectrum. We choose the inertial range scaling so that the small scales have more apparent effect.
Figure \ref{fig:exp4a} (c)-(f) show the decomposed four components of the stream function using a decomposition factor $\alpha=5$; $\psi_{11}$ has only large scale variation while the off-diagonal parts $\psi_{12}$ and $\psi_{21}$ are similar to shear flows; the smallest component $\psi_{22}$ shows no significant large scale variation.

\begin{figure}[h!]
\centering
\subfloat[$\psi$\label{fig:exp4a}]{\includegraphics[width=0.45\textwidth]{./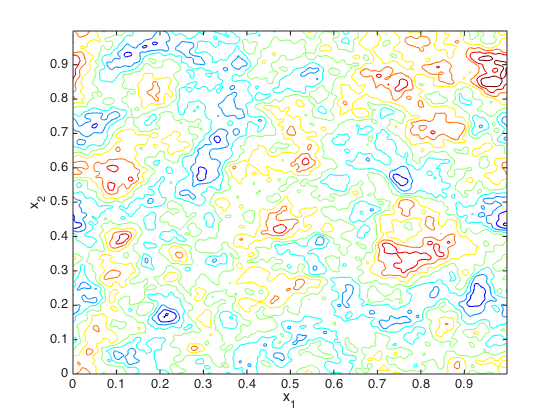}}
\subfloat[$u$ at $y=0.5$ and $t=0.1$\label{fig:exp4b}]{\includegraphics[width=0.45\textwidth]{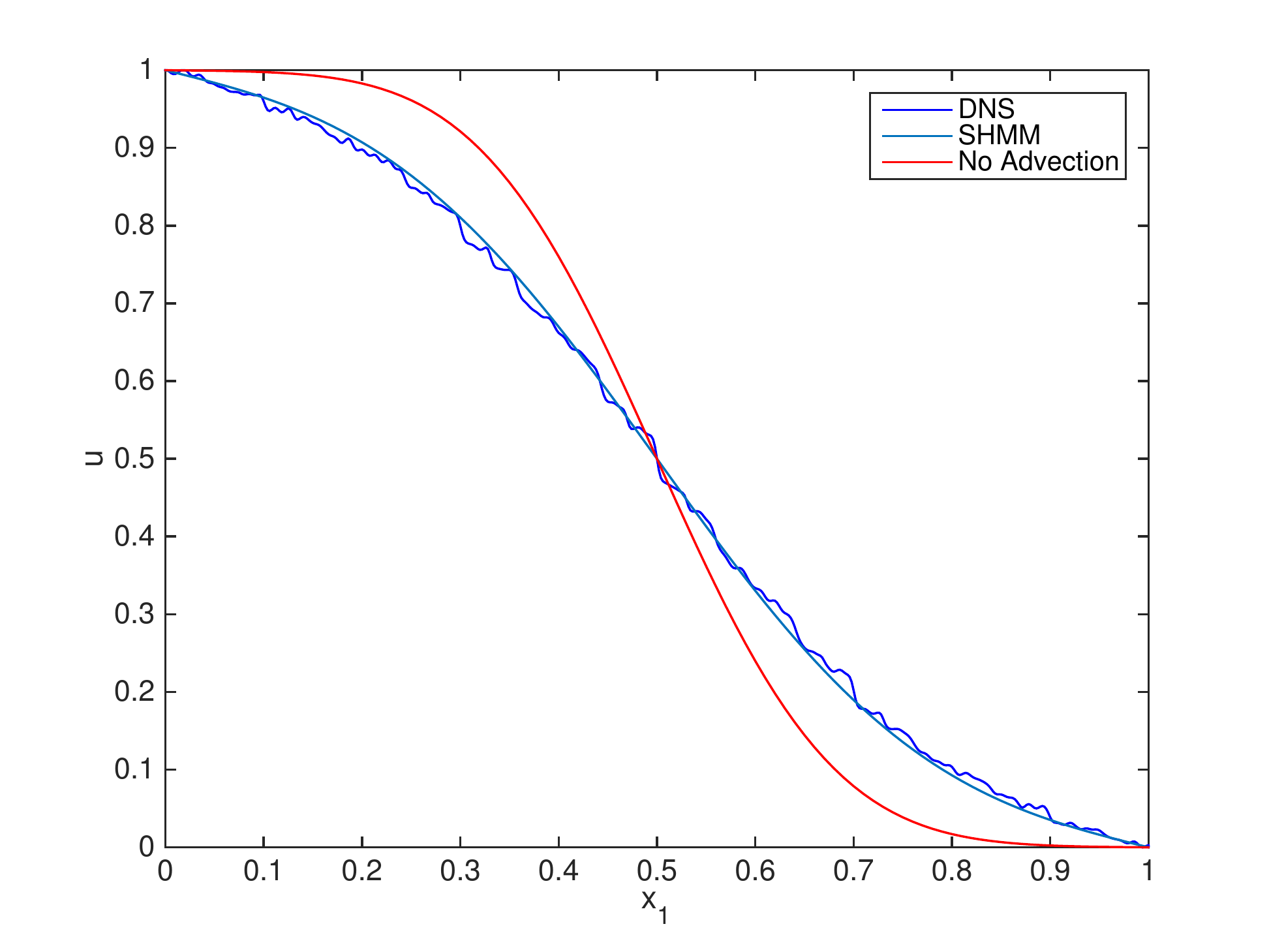}}\\
\subfloat[$\psi_{11}$\label{fig:exp4c}]{\includegraphics[width=0.45\textwidth]{./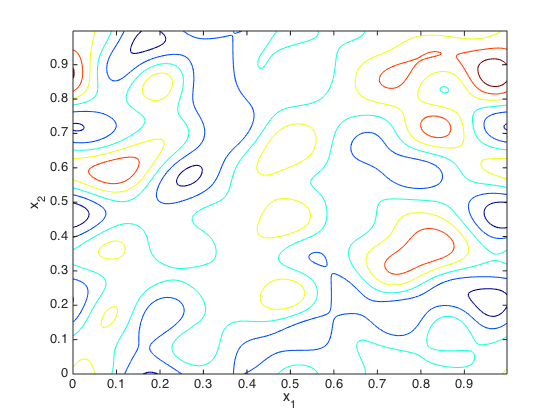}}
\subfloat[$\psi_{12}$\label{fig:exp4d}]{\includegraphics[width=0.45\textwidth]{./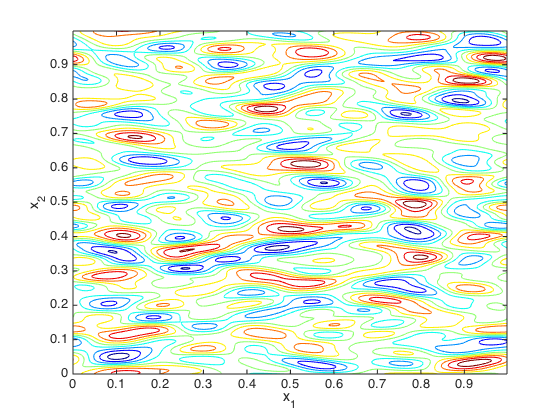}}\\
\subfloat[$\psi_{21}$\label{fig:exp4e}]{\includegraphics[width=0.45\textwidth]{./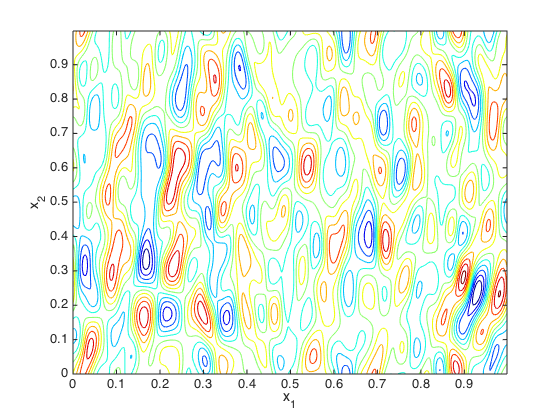}}
\subfloat[$\psi_{22}$\label{fig:exp4f}]{\includegraphics[width=0.45\textwidth]{./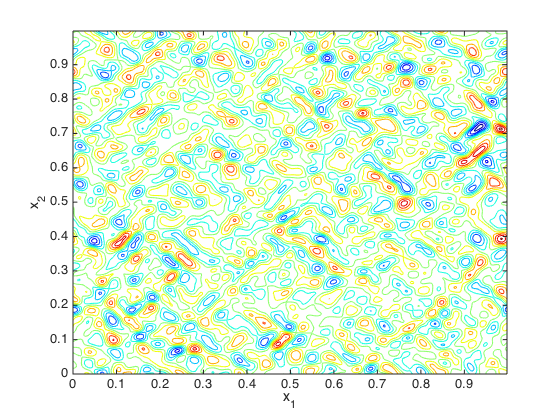}}
\caption{(a) Randomly generated stream function with continuous spectrum and (b) solutions by DNS, SHMM, and no advection. (c)-(f) Decomposition of the stream function $\psi$ into four parts using a decomposition factor $\alpha=5$}\label{fig:ADVDIF:contstream}
\end{figure}

Figure \ref{fig:exp4b} shows the solutions by DNS and SHMM with no advection at $y=0.5$ and $t=0.1$. The SHMM solution is within the fluctuations of the DNS solution without deviating from the DNS solution as in the previous tests. The directly calculated effective diffusivity and the SHMM diffusivity are given in Table \ref{tab:ADVDIF:test4} for reference. There are many possible sources of errors in SHMM such as partially resolved interactions between fine scales of the decomposed components and non-periodicity of the components, the relative error of the effective diffusivity is about 4\%.

\begin{table}[h!]
\begin{center}
\begin{tabular}{|c|c|c|}
\hline
&\textrm{DNS}&\textrm{SHMM}\\
\hline
\textrm{effective diffusivity}&$\begin{pmatrix}2.4441&0\\0&2.7213\end{pmatrix}$&$\begin{pmatrix}2.3509&0\\0&2.7182\end{pmatrix}$\\
\hline
\end{tabular}
\caption{Effective diffusivity from a velocity field with a continuum of scales using DNS and SHMM.}\label{tab:ADVDIF:test4}
\end{center}
\end{table}

\section{Conclusions}\label{CONCLUSION}
In this paper, we have developed the seamless heterogeneous multiscale method (SHMM) to numerically approximate the macroscopic behavior of the passive advection-diffusion equation for 2D steady incompressible flows with a wide range of spatial scales. SHMM imposes scale decomposition of stream functions in the Fourier domain and uses a hierarchy of local grids. The computational complexity only increases linearly as the umber of decomposition level increases. For each level, it uses HMM and closed analytic formulas for shear flows to get the effective diffusivity from each component of the velocity fields. This process is iterated up to the coarse scale of interest. The method is numerically tested and verified to capture the effective diffusivity of various velocity fields with and without scale separation. 

The method can also be extended to the velocity fields with multiple temporal scales using decomposition of velocity fields in time and utilizing multiscale time integrators such as \cite{VSHMM}. An application of SHMM to multi-spatiotemporal velocity fields is reported in \cite{3scVSHMM}.

It would be natural to speculate if the technique presented here can be extended to the fully nonlinear problem of simulating turbulence. There are many obstacles including intermittency. It should however be possible to describe some effects of homogeneous turbulent fluctuations on the mean flow even in the case of full Navier-Stokes. Compare here the effect the $k$ and $\epsilon$ terms have on the mean flow in a $k$-$\epsilon$ model. Also, a recent study \cite{HMMlikeTub} shows great success in simulating turbulence in channel flows using a HMM style combination of coarse scale large-eddy simulations and an array of non-sapce-filling quasi-direct numerical simulations next to the boundary. We plan to study if SHMM can play a role in this context.

\section*{Acknowledgements}
This work has greatly benefited from discussions with Andrew J. Majda, Weinan E and Richard Tsai. The research was partially supported by NSF grant DMS-1217203.



\bibliography{SHMM}{}

\begin{thebibliography}{10}
\expandafter\ifx\csname url\endcsname\relax
  \def\url#1{\texttt{#1}}\fi
\expandafter\ifx\csname urlprefix\endcsname\relax\def\urlprefix{URL }\fi
\expandafter\ifx\csname href\endcsname\relax
  \def\href#1#2{#2} \def\path#1{#1}\fi

\bibitem{CRCP}
W.~W. Grabowski, Coupling cloud processes with the large-scale dynamics using
  the cloud-resolving convection parameterization (crcp), Journal of the
  Atmospheric Sciences 58~(9) (2001) 978--997.

\bibitem{SP}
A.~J. Majda, Multiscale models with moisture and systematic strategies for
  superparameterization, Journal of Atmospheric Sciences 64~(7) (2007)
  2726--2734.

\bibitem{SPperspective}
A.~J. Majda, I.~Grooms, New perspectives on superparameterization for
  geophysical turbulence, Journal of Computational Physics 271~(0) (2014) 60 --
  77, frontiers in Computational Physics Modeling the Earth System.

\bibitem{DurUPSCALING}
L.~Durlofsky, Coarse scale models of two phase flow in heterogeneous
  reservoirs: volume averaged equations and their relationship to existing
  upscaling techniques, Computational Geosciences 2~(2) (1998) 73--92.

\bibitem{DurBLOCKPERM}
L.~J. Durlofsky, Numerical calculation of equivalent grid block permeability
  tensors for heterogeneous porous media, Water Resources Research 27~(5)
  (1991) 699--708.

\bibitem{origHMM}
W.~E, B.~Engquist, The heterogeneous multiscale method, Comm. Math. Sci. 1
  (2003) 87--133.

\bibitem{generalHMM}
A.~Abdulle, W.~E, B.~Engquist, E.~Vanden-Eijnden, The heterogeneous multiscale
  method, Acta Numerica 21 (2012) 1--87.

\bibitem{parabolicSHMM}
Y.~Lee, B.~Engquist, Seamless heterogeneous multi-scale methods for continuous
  spatial scales, to appear.

\bibitem{TUB}
H.~Tennekes, J.~Lumley, A first course in turbulence, 1972.

\bibitem{advdifreview}
A.~J. Majda, P.~R. Kramer, Simplified models for turbulent diffusion: Theory,
  numerical modelling, and physical phenomena, Physics Reports 314~(4--5)
  (1999) 237 -- 574.

\bibitem{seamlessHMM}
W.~E, W.~Ren, E.~Vanden-Eijnden, A general strategy for designing seamless
  multiscale methods, Journal of Computational Physics 228~(15) (2009)
  5437--5453.

\bibitem{waveMC}
F.~W. Elliott, A.~J. Majda, A wavelet monte carlo method for turbulent
  diffusion with many spatial scales, Journal of Computational Physics 113~(1)
  (1994) 82 -- 111.

\bibitem{FourierMC}
F.~W. Elliott, D.~J. Horntrop, A.~J. Majda, A fourier--wavelet monte carlo
  method for fractal random fields, Journal of Computational Physics 132~(2)
  (1997) 384 -- 408.

\bibitem{BLP}
A.~Bensoussan, J.~L. Lions, G.~Papanicolaou, Asymptotic analysis for periodic
  structures, AMS Chelsea Publishing, 2011.

\bibitem{RusHomo}
V.~V. Jikov, S.~M. Kozlov, O.~A. Oleinik, Homogenization of differential
  operators and integral functionals, Springer Berlin Heidelberg, 1994.

\bibitem{PAVA}
G.~Papanicolaou, S.~Varadhan, Boundary value problems with rapidly oscillating
  random coefficients, in: Random Fields, 1979, pp. 835--873.

\bibitem{FanPa3}
A.~Fannjiang, G.~Papanicolaou, Convection-enhanced diffusion for random flows,
  Journal of Statistical Physics 88~(5-6) (1997) 1033--1076.
\newblock \href {http://dx.doi.org/10.1007/BF02732425}
  {\path{doi:10.1007/BF02732425}}.

\bibitem{constmeanflow}
V.~K. Gupta, R.~N. Bhattacharya, Solute dispersion in multidimensional periodic
  saturated porous media, Water Resources Research 22~(2) (1986) 156--164.

\bibitem{meanflow}
G.~Matheron, G.~De~Marsily, Is transport in porous media always diffusive? a
  counterexample, Water Resources Research 16~(5) (1980) 901--917.

\bibitem{quasiperiodichomo}
R.~Bhattacharya, A central limit theorem for diffusions with periodic
  coefficients, The Annals of Probability 13~(2) (1985) pp. 385--396.

\bibitem{pavliotisthesis}
G.~A. Pavliotis, Homogenization theory for advection-diffusion equations with
  mean flow, Ph.D. thesis, Rensselaer Polytechnic Institute (2002).

\bibitem{STUART}
G.~A. Pavliotis, A.~Stuart, Multiscale methods : Averaging and homogenization,
  Vol.~53 of Texts in Applied Mathematics, Springer New York, 2008.

\bibitem{FDHMM}
A.~Abdulle, W.~E, Finite difference heterogeneous multi-scale method for
  homogenization problems, Journal of Computational Physics 191~(1) (2003)
  18--39.

\bibitem{VSHMM}
Y.~Lee, B.~Engquist, {Variable step size multiscale methods for stiff and
  highly oscillatory dynamical systems}, Discrete and Continuous Dynamical
  Systems 34~(3) (2014) 1079--1097.

\bibitem{3scVSHMM}
Y.~Lee, B.~Engquist, Fast time integrators for dynamical systems with several
  temporal scales, to appear.

\bibitem{HMMlikeTub}
N.~D. Sandham, R.~Johnstone, Surface-sampled simulations of turbulent flow at
  high reynolds number, 2014.

\end{thebibliography}
\bibliographystyle{elsarticle-num}


\end{document}